\newtheorem{theorem}{Theorem}[section]
\newtheorem{proposition}[theorem]{Proposition}
\newtheorem{corollary}[theorem]{Corollary}
\theoremstyle{definition}
\theoremstyle{remark}
\numberwithin{equation}{section}
\newcommand{\Oseen}{\mathcal{O}}
\newcommand{\IP}{\mathbb{P}}
\newcommand{\IQ}{\mathbb{Q}}
\newcommand{\Qbar}{\overline{\IQ}}
\newcommand{\B}{\mathscr{B}}
\newcommand{\D}{D_K}
\newcommand{\del}{\eta(K)}
\newcommand{\ka}{\kappa}
\begin{document}

%\title{Not so small generators of number fields}
%\title{A note on small generators of number fields II}
\title{Bounds for the $\ell$-torsion in class groups}

%    Information for second author
\author{Martin Widmer}
\address{Department of Mathematics\\ 
Royal Holloway, University of London\\ 
TW20 0EX Egham\\ 
UK}
\email{martin.widmer@rhul.ac.uk}

%    General info
\subjclass{Primary 11R29, 11R65, 11R45; Secondary 11G50}
\date{\today, and in revised form ....}

\dedicatory{}

\keywords{$\ell$-torsion, class group, number fields, small height}

\begin{abstract}
We prove for each integer $\ell\geq 1$ an unconditional upper bound for the
size of the $\ell$-torsion subgroup $Cl_K[\ell]$ of the class group of $K$, which holds for all but a zero density set of number fields $K$
of degree $d\in\{4,5\}$ (with the additional restriction in the case $d = 4$ that the
field be non-$D_4$). For sufficiently large $\ell$ this improves recent results of Ellenberg, Matchett Wood, and Pierce,
and is also stronger than the best currently known pointwise bounds under GRH. 
Conditional on GRH and on a weak conjecture on the distribution of number fields our bounds
also hold for arbitrary degrees $d$. 
\end{abstract}

\maketitle

% \section*{This is an unnumbered first-level section head}
% This is an example of an unnumbered first-level heading.

% \specialsection*{This is a Special Section Head}
% This is an example of a special section head%
%%%%%%%%%%%%%%%%%%%%%%%%%%%%%%%%%%%%%%%%%%%%%%%%%%%%%%%%%%%%%%%%%%%%%%%%
%\footnote{Here is an example of a footnote. Notice that this footnote
%text is running on so that it can stand as an example of how a footnote
%with separate paragraphs should be written.
%\par
%And here is the beginning of the second paragraph.}%
%%%%%%%%%%%%%%%%%%%%%%%%%%%%%%%%%%%%%%%%%%%%%%%%%%%%%%%%%%%%%%%%%%%%%%%%

\section{Introduction}\label{introductionchapter3}

In this article we prove for each integer $\ell\geq 1$ an unconditional upper bound for the
size of the $\ell$-torsion subgroup $Cl_K[\ell]$ of the class group of $K$, which holds for all but a zero density set of number fields $K$
of degree $d\in\{4,5\}$ (with the additional restriction in the case $d = 4$ that the
field be non-$D_4$). For sufficiently large $\ell$ these results improve results of Ellenberg, Matchett Wood, and Pierce \cite{EllenbergPierceWood},
and are also stronger than the best currently known pointwise bounds assuming GRH due to Ellenberg and Venkatesh \cite{EllVentorclass}. 
Conditional on GRH and on a weak conjecture on the distribution of number fields our bounds
also hold for arbitrary degrees $d$. 

We always assume $X\geq 2$, and that $\ell$ is a positive integer. We shall use the $O(\cdot)$, $\ll$, and $\gg$ notation; throughout
the implied constants will depend only on the indicated parameters.
Denote the modulus of the discriminant of the number field $K$ by $\D$, and its degree $[K:\IQ]$ by $d$.

Bounding $\#Cl_K[\ell]$ by the size of the full class group, and using a classical bound (see, e.g., \cite[Thm 4.4]{Nark1990})  yields the trivial bound\footnote{As usual $\varepsilon$ denotes an arbitrarily small positive number.} 
\begin{alignat}1\label{trivial}
\#Cl_K[\ell] \ll_{d,\varepsilon} \D^{1/2+\varepsilon}.
\end{alignat}
While it is conjectured (see, e.g., \cite[Conjecture 1.1]{EllVentorclass}, \cite[Section 3]{Duke} and \cite{Zhang05})  that
$$\#Cl_K[\ell] \ll_{d,\ell,\varepsilon} \D^{\varepsilon},$$
unconditional nontrivial bounds that hold for all number fields of degree $d$
are known only for $\ell=2$, and for $d\leq 4$ and $\ell=3$.
For 
%quadratic fields $K$ and 
$d=\ell=2$ the conjecture follows from Gauss' genus theory,
whereas for $(d, \ell)=(2,3)$ the first nontrivial bounds were obtained by Pierce \cite{Pierce05, Pierce06}, and Helfgott and Venkatesh  \cite{HelfgottVenkatesh}. Currently 
the best bound is
$$\#Cl_K[3] \ll_{\varepsilon} \D^{1/3+\varepsilon}$$
due to Ellenberg and Venkatesh  \cite{EllVentorclass} which holds
also for cubic fields. Moreover, they established a nontrivial bound for quartic fields
(with, e.g., an exponent $83/168+\varepsilon$ provided $K$ is an $S_4$ or $A_4-$field).
Another, very  recent, breakthrough due to Bhargava, Shankar, Taniguchi, Thorne, Tsimerman, and Zhao \cite{Bhargava2tor} provides for arbitrary $d$ the bound
$$\#Cl_K[2] \ll_{d,\varepsilon} \D^{1/2-1/(2d)+\varepsilon},$$
and for $d\in \{3,4\}$ they can even take 
the exponent $0.2784$. 

Regarding general $\ell$ there are only conditional results, assuming GRH.  
The latter is used to guarantee the existence of many small splitting primes;  the
idea of using these to investigate torsion in class groups has been around for a while,
see, e.g., \cite{BoydKisilevsky, Soundararajan}.  However, Ellenberg and Venkatesh \cite{EllVentorclass} have greatly extended 
this strategy, and proved the first (although conditional on GRH) nontrivial general upper bound   
\begin{alignat}1\label{generalbound}
\#Cl_K[\ell] \ll_{d,\ell,\varepsilon} \D^{1/2-1/(2\ell(d-1))+\varepsilon}.
\end{alignat}

So much for pointwise bounds; regarding results on the average Davenport and Heilbronn \cite{81} showed that
$$\sum_{[K:\IQ]=2 \atop \D\leq X}\#Cl_K[3] \sim 5/(3\zeta(2))X,$$
and Bhargava \cite{Bhargava05} established the asymptotics for $2$-torsion in cubic fields
$$\sum_{[K:\IQ]=3 \atop \D\leq X}\#Cl_K[2] \sim 23/(16\zeta(3))X.$$
No other asymptotics  are known but Heath-Brown and Pierce \cite{Heath-BrownPierce} proved that for primes $\ell\geq 5$  
$$\sideset{}{'}\sum_{[K:\IQ]\leq 2 \atop \D\leq X}\#Cl_K[\ell] \ll_{\ell,\varepsilon} X^{3/2-3/(2\ell+2)+\varepsilon},$$
where the apostrophe indicates that the sum is restricted to imaginary quadratic fields.
Recently Ellenberg, Matchett Wood, and Pierce \cite[Corollary 1.1.1 and 1.1.2]{EllenbergPierceWood} established the first 
nontrivial  unconditional average bounds for arbitrary $\ell$.
\begin{corollary}[Ellenberg, Matchett Wood, and Pierce]\label{CorEPW}
Suppose $d\in \{2,3,4,5\}$, $\ell(2)=\ell(3)=1$, $\ell(4)=8$, $\ell(5)=25$,
and $\varepsilon>0$.  As $K$ ranges over degree $d$ number fields with $\D\leq X$ (and non-$D_4$ in the case $d = 4$), we have
$$\sum_K\#Cl_K[\ell] \ll_{\ell,\varepsilon} X^{3/2-1/(2\ell(d-1))+\varepsilon}$$
if $\ell\geq \ell(d)$, and we have
$$\sum_K\#Cl_K[\ell] \ll_{\ell,\varepsilon} X^{3/2-\delta_0(d)+\varepsilon}$$
if $\ell<\ell(d)$, where $\delta_0(4)=1/48$ and $\delta_0(5)=1/200$.
\end{corollary}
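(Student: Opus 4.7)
My plan is to combine the Ellenberg--Venkatesh ``small split primes'' method with an averaging argument over number fields of degree $d$. Recall the basic mechanism behind the conditional bound \eqref{generalbound}: Ellenberg--Venkatesh show that a supply of many small prime ideals in $K$ translates, via a counting-of-ideals lemma, into a saving over the trivial bound $\#Cl_K[\ell] \ll D_K^{1/2+\varepsilon}$. Under GRH every $K$ admits $\gg Q/\log Q$ completely split rational primes $p\leq Q$ for $Q$ as large as $D_K^{1/(2\ell(d-1))+\varepsilon}$, which yields \eqref{generalbound} pointwise. The key observation for the unconditional statement is that the hypothesis of many small split primes is robust: it should hold for \emph{most} $K$, and this is exactly the kind of statement an average bound is designed to exploit.

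I would fix a parameter $Q=X^{\alpha}$ with $\alpha>0$ to be optimized, and call a degree-$d$ field $K$ with $D_K\leq X$ \emph{good} if it admits at least $Q^{1-\varepsilon}$ completely split rational primes $p\leq Q$, and \emph{bad} otherwise. For good $K$ the Ellenberg--Venkatesh inequality yields $\#Cl_K[\ell] \ll X^{1/2-\alpha+\varepsilon}$, and the sharp estimate $\#\{K:[K:\IQ]=d,\,D_K\leq X\} \ll X$---due to Davenport--Heilbronn for $d=3$ and to Bhargava for $d\in\{4,5\}$, the latter requiring that $D_4$-quartic fields be excluded---gives a good-field contribution
\begin{equation*}
\sum_{K \text{ good}} \#Cl_K[\ell] \ll X^{3/2-\alpha+\varepsilon}.
\end{equation*}

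The hard part, and the heart of the argument, is bounding the bad-field contribution, which the trivial pointwise bound alone cannot handle. A bad $K$ must fail to split completely at a positive density of primes $p\leq Q$, so that the $\IQ_p$-algebra $K\otimes \IQ_p$ realizes only a few of the possible étale algebra types at many $p$. Here I would use the Bhargava-style parametrisations of quartic and quintic rings by integral orbits on prehomogeneous vector spaces, combined with a large sieve counting orbits satisfying a positive density of local conditions, to produce a bound of the shape $\#\{\text{bad }K\} \ll X^{1-\delta(\alpha,d)}$ for some $\delta(\alpha,d)>0$. Feeding this into the trivial bound gives a bad-field contribution of at most $X^{3/2-\delta(\alpha,d)+\varepsilon}$.

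Finally I would balance $\alpha$ against $\delta(\alpha,d)$. For $\ell \geq \ell(d)$, the optimum is $\alpha=1/(2\ell(d-1))$, which the sieve input comfortably supports, recovering the first bound of the corollary. For $\ell<\ell(d)$ the large-sieve saving $\delta(\alpha,d)$ cannot keep up with the Ellenberg--Venkatesh gain as $\alpha$ grows, so the optimal exponent saturates at an $\ell$-independent value $\delta_0(d)$; the concrete numerics $\ell(4)=8$, $\delta_0(4)=1/48$, $\ell(5)=25$, $\delta_0(5)=1/200$ reflect the relative strength of the parametrisations of quartic versus quintic rings. I expect the bulk of the work to lie precisely in this sieve step for the bad fields and in the uniform counts of quartic/quintic fields with prescribed local conditions; everything else is formal once those ingredients are in hand.
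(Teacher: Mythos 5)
Your overall architecture (good/bad dichotomy in the number of small completely split primes, Ellenberg--Venkatesh for the good fields, trivial bound for the bad ones, then balance) is indeed the Ellenberg--Matchett Wood--Pierce strategy, and your final bookkeeping is consistent with the stated exponents. But note that in this paper the corollary is not re-proved: it is deduced in two lines from the quoted Theorem \ref{ThmEPW}. One splits the fields with $\D\leq X$ into the exceptional set, of size $O_{\ell,\varepsilon}(X^{1-\min\{1/(2\ell(d-1)),\delta_0(d)\}+\varepsilon})$, where one applies the trivial bound (\ref{trivial}), and the remaining fields, of which there are $O(X)$ by Davenport--Heilbronn (for $d=2,3$) and Bhargava (for $d=4,5$), where the pointwise bound of Theorem \ref{ThmEPW} applies; both contributions are $\ll X^{3/2-\min\{1/(2\ell(d-1)),\delta_0(d)\}+\varepsilon}$. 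Your proposal instead undertakes to re-prove Theorem \ref{ThmEPW} itself, and that is where the genuine gap lies: the bound $\#\{\text{bad }K\}\ll X^{1-\delta(\alpha,d)}$ is precisely the hard content of the EPW paper, and you only assert that a ``large sieve counting orbits'' would produce it. In fact the mechanism is different: EPW derive it from asymptotic counts of degree-$d$ fields with prescribed splitting behaviour at a finite set of primes, with power-saving error terms \emph{uniform} in those primes (their Theorems 2.2 and 2.3, resting on Bhargava's uniformity/tail estimates and on Davenport--Heilbronn with local conditions for $d=2,3$), combined with a first-moment (Chebyshev-type) argument; the specific values $\delta_0(4)=1/48$, $\delta_0(5)=1/200$ and hence the thresholds $\ell(4)=8$, $\ell(5)=25$ fall out of that quantitative analysis and are nowhere derived in your sketch. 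Without either proving that input or citing it (equivalently, citing Theorem \ref{ThmEPW}), the argument is a plan rather than a proof.

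Two smaller inaccuracies: the exclusion of $D_4$-quartic fields is not needed for the $O(X)$ count of quartic fields (that upper bound holds for all quartics); it is needed in the local-conditions counting step underlying the bad-field bound. And the ceiling $Q\leq \D^{1/(2\ell(d-1))+\varepsilon}$ in the good-field step does not come from GRH (which already supplies split primes of size a power of $\log \D$); it comes from the hypothesis of the Ellenberg--Venkatesh key lemma, namely that the prime norms must not exceed $\D^{\delta}$ with $\delta<1/(2\ell(d-1))$, which in turn reflects the lower bound for heights of generators that this paper's Theorem \ref{Thm1} improves on average.
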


Corollary \ref{CorEPW} is an immediate consequence of their main result \cite[Theorem 1.1]{EllenbergPierceWood} (see Theorem 1.2 below) which is {\em unconditional},
and gives upper bounds of the same size as the conditional result (\ref{generalbound}) outside a family of density zero, provided $d\leq 5$ and $\ell$ is sufficiently large.
To state their result in a uniform way we additionally  set $\delta_0(2)=\delta_0(3)=1$.

\begin{theorem}[Ellenberg, Matchett Wood, and Pierce]\label{ThmEPW}
Suppose $d\in \{2,3,4,5\}$,  and $\varepsilon>0$. Then for all but 
$O_{\ell,\varepsilon}(X^{1-\min\{1/(2\ell(d-1)),\delta_0(d)\}+\varepsilon})$ 
%$O(X^{\max\{1-\delta_0(d)+\varepsilon,1-1/(\ell(d+1))+\varepsilon,\gamma(d+1)\}})$ 
degree $d$ number fields $K$
with $\D\leq X$  (and non$-D_4$ when $d=4$) we have
$$\#Cl_K[\ell] \ll_{\ell,\varepsilon} \D^{1/2-\min\{1/(2\ell(d-1)),\delta_0(d)\}+\varepsilon}.$$
\end{theorem}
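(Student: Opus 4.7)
The strategy is the one pioneered by Ellenberg and Venkatesh \cite{EllVentorclass}, but arranged so that the hypothesis ``many small split primes'' needs to hold only for a density-one family rather than for every $K$. Concretely, one combines (i) the quantitative Ellenberg--Venkatesh reduction, which converts a supply of small totally split rational primes into a power saving on $\#Cl_K[\ell]$, with (ii) uniform counts for the number of degree-$d$ fields of bounded discriminant with restricted splitting behaviour at a given prime $p$. For $d\in\{4,5\}$ the latter ingredient rests on Bhargava's parametrisations of quartic and quintic rings, and the constants $\delta_0(4)=1/48$, $\delta_0(5)=1/200$ quantify the range of $p$ over which these parametrisations yield a power-saving remainder.

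\textbf{Step 1 (The EV lemma).} Record the unconditional statement: for any degree-$d$ field $K$ and any $1\leq Y\leq \D^{1/(2\ell(d-1))}$, if $M$ denotes the number of rational primes $p\leq Y$ that split completely in $K$, then
\[
\#Cl_K[\ell]\ll_{d,\ell,\varepsilon}\D^{1/2+\varepsilon}/M.
\]
Consequently, once $M\gg Y/\log Y$ with $Y=\D^{1/(2\ell(d-1))+\varepsilon'}$ one recovers the desired bound $\#Cl_K[\ell]\ll \D^{1/2-1/(2\ell(d-1))+\varepsilon}$.

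\textbf{Step 2 (Sieve over fields).} Set $\ka:=\min\{1/(2\ell(d-1)),\delta_0(d)\}$ and $Y:=X^{\ka}$. Call $K\in\F(X)$ \emph{good} if at least $\tfrac12\pi(Y)$ rational primes $p\leq Y$ split completely in $K$, and \emph{bad} otherwise (where $\F(X)$ denotes the degree-$d$ fields with $\D\leq X$, non-$D_4$ if $d=4$). For good $K$ Step 1 supplies the target inequality, so it suffices to prove $\#\{\text{bad }K\}\ll X^{1-\ka+\varepsilon}$. Counting incidences $(K,p)$ with $K$ bad and $p\leq Y$ not totally split in $K$ gives
\[
\#\{\text{bad }K\}\cdot\tfrac12\pi(Y)\;\leq\;\sum_{p\leq Y}\#\{K\in\F(X):p\text{ does not split completely in }K\}.
\]

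\textbf{Step 3 (Field counts with local conditions).} The main input is a uniform-in-$p$ bound
\[
\#\{K\in\F(X):p\text{ does not split completely in }K\}\;\ll\;c_d\,X\;+\;X/p^{1+\eta}
\]
for some $\eta>0$ and $c_d<1$, valid in the range $p\leq X^{\delta_0(d)}$. For $d=2$ this is trivial; for $d=3$ it follows from the sieved Davenport--Heilbronn count with local conditions; for $d\in\{4,5\}$ it comes from Bhargava's orbit counts together with the associated tail estimates for reducible/$p$-special orbits, and the values $\delta_0(4)=1/48$, $\delta_0(5)=1/200$ are precisely the thresholds to which the current power-saving tail bounds extend. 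Summing this estimate over $p\leq Y$ produces a total of $O(XY/\log Y)\cdot(1-c_d/2)$ plus an error $O(X)$, and when combined with the left-hand side of Step~2 yields $\#\{\text{bad }K\}\ll X^{1-\ka+\varepsilon}$, completing the proof.

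\textbf{Main obstacle.} The hard work is entirely in Step 3 for $d=4,5$: one needs a power-saving, uniformly-in-$p$ bound on the number of quartic/quintic fields of discriminant $\leq X$ whose resolvent over $\IF_p$ is not totally split. This requires a delicate analysis of the reducibility locus in Bhargava's representations and the tail behaviour of the orbit count at primes $p$ nearly as large as $X^{\delta_0(d)}$; the specific numerics $1/48$ and $1/200$ are what drop out of the currently best available parametrisation-based tail estimates.
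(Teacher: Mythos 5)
This theorem is not proved in the paper at all: it is quoted verbatim from Ellenberg--Matchett Wood--Pierce \cite{EllenbergPierceWood}, and Widmer only uses their Proposition 6.1 and Theorems 2.2--2.3 as a black box later on. So your sketch has to be judged as a reconstruction of the EPW argument, and as such it contains a genuine gap in Steps 2--3. First, your notion of ``good'' is miscalibrated: a prime $p$ splits completely in a typical degree-$d$ field with density roughly $1/d!$ (more precisely a constant $c_0(d)<1/2$), so a typical $K$ has only about $c_0(d)\pi(Y)$ totally split primes $p\leq Y$, far fewer than your threshold $\tfrac12\pi(Y)$. With your definition essentially \emph{every} field is bad, and the fields to which Step 1 applies form a negligible set, so the theorem does not follow. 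The threshold must be a fixed fraction of the \emph{expected} count, e.g.\ $\tfrac{c_0(d)}{2}\,Y/\log Y$, exactly as in EPW's Proposition 6.1 (and as used in Widmer's proof of Theorem \ref{Thm1}).

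Second, even after correcting the threshold, the first-moment incidence bound you set up cannot deliver a power-saving exceptional set. With your Step 3 input the right-hand side of the Step 2 inequality is of size $c_d\,X\,\pi(Y)+O(X)$, so dividing by $\tfrac12\pi(Y)$ only gives $\#\{\text{bad }K\}\ll X$ --- the main terms do not cancel, and the asserted conclusion $\#\{\text{bad }K\}\ll X^{1-\ka+\varepsilon}$ is an arithmetic non sequitur. What is actually needed, and what EPW prove, is an asymptotic $\#\{K\in\F(X):p\text{ totally split}\}=c(p)X+O(X^{1-\eta}p^{A})$ with the error term uniform for $p\leq X^{\delta_0(d)}$ (Davenport--Heilbronn with local conditions for $d=3$, Bhargava's quartic/quintic counts for $d=4,5$; these are their Theorems 2.2--2.3), combined with a second-moment (variance/Chebyshev) argument to show that few fields have split-prime counts below half the mean; this requires counting fields with splitting conditions imposed at \emph{pairs} of primes simultaneously, and the exponents $\delta_0(4)=1/48$, $\delta_0(5)=1/200$ are dictated by the range of uniformity of those error terms, not by a tail analysis of ``non-split resolvents'' alone. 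Step 1 is fine as stated (it is Proposition \ref{keylemma} together with the worst-case bound $\del\gg\D^{1/(2(d-1))}$, totally split primes being automatically unramified and not extended from proper subfields), but without the corrected threshold and the second-moment input the deduction of the density statement collapses.
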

Very roughly speaking, Ellenberg, Matchett Wood, and Pierce's strategy is to show that ``most fields'' have sufficiently many small splitting primes,
and  then to apply the general strategy of Ellenberg and Venkatesh (see Proposition \ref{keylemma} in Section \ref{sectionkeylemma}).
We follow this approach but combine it with a new idea, showing that for ``most'' number fields the smallest height of a primitive
element is significantly bigger than in the worst case scenario, at least\footnote{Actually, we do not know whether for ``most'' cubic fields the smallest generator is significantly bigger than in the worst case scenario,
but Ruppert \cite[Proposition 2]{Rupp} has shown that this is definitely not the case for quadratic fields. Consequently, we only get an improvement if $d\geq 4$ although
Theorem \ref{Thm1} and its proof remain valid for $d\in \{2,3\}$.} when $d\geq 4$.
Therefore, our main result  improves (see Corollary \ref{Cor2}) the quartic and quintic case of Theorem \ref{ThmEPW} 
and consequently Corollary \ref{CorEPW} (see Corollary \ref{Cor3}).
\begin{theorem}\label{Thm1}
Suppose $d\in \{4,5\}$,  $0<\gamma\leq 1/(d+1)$, and $\varepsilon>0$. Then for all but 
$O_{\ell,\gamma,\varepsilon}(X^{1-\min\{\gamma/\ell,\delta_0(d)\}+\varepsilon}+X^{\gamma(d+1)})$ 
%$O(X^{\max\{1-\delta_0(d)+\varepsilon,1-1/(\ell(d+1))+\varepsilon,\gamma(d+1)\}})$ 
degree $d$ number fields $K$
with $\D\leq X$  (and non$-D_4$ when $d=4$) we have
$$\#Cl_K[\ell] \ll_{\ell,\gamma,\varepsilon} \D^{1/2-\min\{\gamma/\ell,\delta_0(d)\}+\varepsilon}.$$
\end{theorem}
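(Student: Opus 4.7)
The plan is to combine the strategy of Ellenberg, Matchett Wood, and Pierce (Theorem \ref{ThmEPW}) with a sieve on the minimum height of a primitive element. For a degree-$d$ field $K$, let $H_{\min}(K)$ denote the smallest (appropriately normalised) height of a primitive element of $K$. A classical counting argument, parameterising fields by the integer minimal polynomial of a generator, shows that the number of $K$ with $\D\leq X$ and $H_{\min}(K)\leq T$ is $\ll T^{d+1}$; setting $T=X^\gamma$ (permissible because the constraint $\gamma\leq 1/(d+1)$ keeps this count below $X$) yields at most $O(X^{\gamma(d+1)})$ such fields. This accounts for the second term in the exceptional set of Theorem \ref{Thm1}.

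For the remaining fields $K$ we have $H_{\min}(K)>X^\gamma$, and we wish to invoke Proposition \ref{keylemma}. That lemma delivers a bound of the form $\#Cl_K[\ell]\ll_{d,\ell,\varepsilon}\D^{1/2+\varepsilon}/M$ whenever $K$ admits $M$ rational primes $p\leq Y$ splitting completely (or with sufficiently small inertia) in $K$, for a threshold $Y$ determined by the geometry of numbers of $\mathcal{O}_K$. In the EPW setting one can only take $Y\leq \D^{1/(2\ell(d-1))}$, reflecting the worst-case Minkowski-type bound $H_{\min}(K)\ll \D^{1/(2(d-1))}$; under the stronger lower bound $H_{\min}(K)>X^\gamma$ the same argument permits the enlarged range $Y\leq \D^{\gamma/\ell}$. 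The final ingredient is the EPW moment/sieve argument \cite{EllenbergPierceWood}, which bounds the set of $K$ with $\D\leq X$ for which the prime-splitting hypothesis of the key lemma fails by $\ll X^{1-\min\{\gamma/\ell,\delta_0(d)\}+\varepsilon}$, where $\delta_0(d)$ is the intrinsic EPW threshold independent of $\ell$. Combined with the small-generator fields from the sieve step, this gives the claimed exceptional set, and for all $K$ outside it the key lemma delivers
$$\#Cl_K[\ell]\ll_{d,\ell,\gamma,\varepsilon}\D^{1/2-\min\{\gamma/\ell,\delta_0(d)\}+\varepsilon}.$$

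The principal difficulty is the amplification step: making precise how the lower bound on $H_{\min}(K)$ enlarges the admissible range of $Y$ in Proposition \ref{keylemma}. This requires a careful geometry-of-numbers analysis of $\mathcal{O}_K$ as a lattice in $K\otimes_\IQ \IR$, tracking how its successive minima --- in particular, the smallest one producing a primitive element over $\IQ$ --- control the Ellenberg--Venkatesh construction of small generators $\beta$ of the $\ell$-th power ideals $\mathfrak{p}^\ell$. By comparison, the counting of fields with small generators and the adaptation of the EPW moment argument to the enlarged range of $Y$ should be relatively routine reworkings of existing machinery.
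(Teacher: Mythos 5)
Your proposal is essentially the paper's own argument: count fields with a generator of relative height at most $X^{\gamma}$ (giving the $X^{\gamma(d+1)}$ term), feed the surviving lower bound $\del>\D^{\gamma}$ into Proposition \ref{keylemma} to admit split primes up to $\D^{\gamma/\ell}$, and use the Ellenberg--Matchett Wood--Pierce sieve to bound the fields lacking such primes by $X^{1-\min\{\gamma/\ell,\delta_0(d)\}+\varepsilon}$ (the paper only adds a routine dyadic decomposition in $\D$). The ``amplification step'' you single out as the principal difficulty needs no new geometry-of-numbers work: Proposition \ref{keylemma} already takes $\del>\D^{\gamma}$ as a hypothesis, since the Ellenberg--Venkatesh proof goes through verbatim once their Lemma 2.2 is replaced by that assumption.
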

We set $\ell_4=10$ and $\ell_5=34$.
With  $\gamma=\ell/(\ell(d+1)+1)$ if $\ell\geq \ell_d$ and $\gamma=(1-\delta_0(d))(d+1)$ otherwise we have
$1-\min\{\gamma/\ell,\delta_0(d)\}=\gamma(d+1)$, and hence Theorem \ref{Thm1} yields immediately the following corollary
which improves the quartic and quintic case of the main result \cite[Theorem 1.1]{EllenbergPierceWood} when $\ell>7$ ($d=4$) and $\ell>24$ ($d=5$)
respectively.
\begin{corollary}\label{Cor2}
Suppose $d\in \{4,5\}$,  and $\varepsilon>0$. 
If $\ell\geq \ell_d$ then for all but 
$O_{\ell,\varepsilon}(X^{1-1/(\ell(d+1)+1)+\varepsilon})$
degree $d$ number fields $K$
with $\D\leq X$  (and non$-D_4$ when $d=4$) we have
$$\#Cl_K[\ell] \ll_{\ell,\varepsilon} \D^{1/2-1/(\ell(d+1)+1)+\varepsilon}.$$
If $\ell<\ell_d$ then for all but 
$O_{\ell,\varepsilon}(X^{1-\delta_0(d)+\varepsilon})$
degree $d$ number fields $K$
with $\D\leq X$  (and non$-D_4$ when $d=4$) we have
$$\#Cl_K[\ell] \ll_{\ell,\varepsilon} \D^{1/2-\delta_0(d)+\varepsilon}.$$
\end{corollary}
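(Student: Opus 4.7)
The plan is to derive Corollary~\ref{Cor2} immediately from Theorem~\ref{Thm1} by choosing the free parameter $\gamma\in(0,1/(d+1)]$ optimally in each of the two ranges of $\ell$. Theorem~\ref{Thm1} supplies, for any such $\gamma$, an exceptional set of size $O(X^{1-\min\{\gamma/\ell,\delta_0(d)\}+\varepsilon}+X^{\gamma(d+1)})$ outside of which $\#Cl_K[\ell]\ll \D^{1/2-\min\{\gamma/\ell,\delta_0(d)\}+\varepsilon}$. So the task is to pick $\gamma$ that makes $\min\{\gamma/\ell,\delta_0(d)\}$ as large as possible while keeping the second error summand $X^{\gamma(d+1)}$ no larger than the first, since the class-group exponent and the first error term both improve as this minimum grows.

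For the range $\ell\geq\ell_d$ I would set $\gamma=\ell/(\ell(d+1)+1)$, the unique value that balances the two error summands. A direct computation gives $\gamma/\ell=1/(\ell(d+1)+1)$ and $\gamma(d+1)=1-1/(\ell(d+1)+1)$, and the constraint $\gamma\leq 1/(d+1)$ is automatic. What remains is to verify that $\min\{\gamma/\ell,\delta_0(d)\}=\gamma/\ell$, i.e.\ that $1/(\ell(d+1)+1)\leq \delta_0(d)$. This rearranges to $\ell\geq (1-\delta_0(d))/((d+1)\delta_0(d))$, which evaluates to $47/5$ for $d=4$ and to $199/6$ for $d=5$; the thresholds $\ell_4=10$ and $\ell_5=34$ are defined precisely as the smallest integers satisfying this. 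Feeding this $\gamma$ into Theorem~\ref{Thm1} then yields the stated class-group exponent $1/2-1/(\ell(d+1)+1)+\varepsilon$ and the matching exceptional bound $X^{1-1/(\ell(d+1)+1)+\varepsilon}$.

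For the range $\ell<\ell_d$ I would instead take $\gamma=(1-\delta_0(d))/(d+1)$, so that $\gamma(d+1)=1-\delta_0(d)$. The reverse of the numerical inequality above, which is precisely $\ell<\ell_d$, gives $\gamma/\ell\geq \delta_0(d)$, hence $\min\{\gamma/\ell,\delta_0(d)\}=\delta_0(d)$. Both error summands in Theorem~\ref{Thm1} then contribute $X^{1-\delta_0(d)+\varepsilon}$, and the class-group bound acquires exponent $1/2-\delta_0(d)+\varepsilon$, exactly as required.

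The \emph{main obstacle} here is essentially nonexistent: the corollary is a pure bookkeeping exercise in optimising $\gamma$ and verifying the numerical thresholds $\ell_d$. All the substantive work is hidden inside Theorem~\ref{Thm1} itself, whose proof is where the genuinely new ingredient — that for most quartic or quintic fields the smallest primitive element has height larger than in the worst case — must be combined with the Ellenberg--Matchett-Wood--Pierce and Ellenberg--Venkatesh machinery to produce the two-term exceptional bound that we have just optimised.
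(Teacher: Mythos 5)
Your proposal is correct and follows essentially the same route as the paper, which also deduces Corollary~\ref{Cor2} by feeding the balancing choices $\gamma=\ell/(\ell(d+1)+1)$ (for $\ell\geq\ell_d$) and $\gamma$ with $\gamma(d+1)=1-\delta_0(d)$ (for $\ell<\ell_d$) into Theorem~\ref{Thm1}, the thresholds $\ell_4=10$, $\ell_5=34$ arising from exactly your inequality $1/(\ell(d+1)+1)\leq\delta_0(d)$. The only discrepancy is that the paper writes the second choice as $\gamma=(1-\delta_0(d))(d+1)$, an evident misprint for your value $(1-\delta_0(d))/(d+1)$, which is the one actually needed and is admissible since it is smaller than $1/(d+1)$.
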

Corollary \ref{Cor2} and dyadic summation, using the trivial bound (\ref{trivial}) for the exceptional fields, yields our next result which improves the quartic and quintic case
of \cite[Corollary 1.1.1]{EllenbergPierceWood}.
\begin{corollary}\label{Cor3}
Suppose $d\in \{4,5\}$, and $\varepsilon>0$. 
As $K$ ranges over degree $d$ number fields with $\D\leq X$ (and non-$D_4$ in the case $d = 4$), we have
$$\sum_K\#Cl_K[\ell] \ll_{\ell,\varepsilon} X^{3/2-1/(\ell(d+1)+1)+\varepsilon}$$
if $\ell\geq \ell_d$, and we have
$$\sum_K\#Cl_K[\ell] \ll_{\ell,\varepsilon} X^{3/2-\delta_0(d)+\varepsilon}$$
if $\ell<\ell_d$.
\end{corollary}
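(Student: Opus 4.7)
The plan is a routine dyadic summation, exactly as the preceding sentence in the paper indicates. I would partition the set of degree $d$ number fields $K$ with $\D\leq X$ (non-$D_4$ when $d=4$) into dyadic ranges $Y/2<\D\leq Y$ for $Y=X,X/2,X/4,\ldots$, and then split each range into two subsets: the ``good'' fields that satisfy the pointwise bound from Corollary \ref{Cor2}, and the ``exceptional'' fields, whose cardinality is controlled by the exceptional set in Corollary \ref{Cor2}.

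In each dyadic range, the total number of fields is $O(Y)$ by the standard Schmidt-type bound on the number of degree $d$ number fields with bounded discriminant (which suffices for $d\in\{4,5\}$). For the good fields, the pointwise estimate from Corollary \ref{Cor2} gives
$$\#Cl_K[\ell]\ll_{\ell,\varepsilon} Y^{1/2-\eta_d+\varepsilon},$$
where $\eta_d$ denotes either $1/(\ell(d+1)+1)$ or $\delta_0(d)$, according to whether $\ell\geq\ell_d$ or $\ell<\ell_d$. Combined with the $O(Y)$ field count this contributes $O(Y^{3/2-\eta_d+\varepsilon})$ to the dyadic sum. For the exceptional fields, I apply the trivial bound (\ref{trivial}), which gives $\#Cl_K[\ell]\ll_{d,\varepsilon} Y^{1/2+\varepsilon}$; since Corollary \ref{Cor2} guarantees that there are at most $O(Y^{1-\eta_d+\varepsilon})$ such fields with $\D\leq Y$, their total contribution in the dyadic range is also $O(Y^{3/2-\eta_d+2\varepsilon})$.

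Adding the two contributions in each dyadic range and summing the geometric series $\sum_{k\geq 0}(X/2^k)^{3/2-\eta_d+2\varepsilon}$ yields the claimed bound $\ll_{\ell,\varepsilon} X^{3/2-\eta_d+\varepsilon}$ (after renaming $\varepsilon$). There is no real obstacle: the argument only requires the pointwise bound, the exceptional-set bound, the trivial bound, and a linear upper bound for the count of number fields of degree $d\in\{4,5\}$ ordered by discriminant, all of which are available. The same scheme produces both cases of the corollary, the only difference being the value of $\eta_d$.
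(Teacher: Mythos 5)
Your proposal is correct and follows essentially the same route the paper indicates: dyadic summation over discriminant ranges, applying the pointwise bound of Corollary \ref{Cor2} together with a linear count of degree $d$ fields for the good fields, and the trivial bound (\ref{trivial}) combined with the exceptional-set estimate of Corollary \ref{Cor2} for the exceptional fields. The exponent bookkeeping and the geometric-series summation are exactly as intended, so no further comment is needed.
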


Bhargava \cite{Bhargava05, Bhargava10} proved that the conjectured asymptotics
$c_dX$  for the number of degree $d$ fields $K$ with $\D\leq X$ holds true for $d\in \{4,5\}$,
and in the case $d=4$ also (with a different positive constant) when restricting to non-$D_4$ fields.
Thus, applying Theorem \ref{Thm1} with $\gamma=1/(d+1)-\varepsilon$ we get the following corollary.

\begin{corollary}\label{Cor1}
Suppose $d\in \{4,5\}$, and $\varepsilon>0$. If $\ell\geq \ell_d$ then for $100\%$ of non-$D_4$ degree $d$ fields (when enumerated by modulus of the discriminant) we have 
$$\#Cl_K[\ell] \ll_{\ell,\varepsilon} \D^{1/2-1/(\ell(d+1))+\varepsilon}.$$
Moreover, if $1\leq \ell<\ell_d$ then  for $100\%$ of non-$D_4$ degree $d$ fields we have 
$$\#Cl_K[\ell] \ll_{\varepsilon} \D^{1/2-\delta_0(d)+\varepsilon}.$$
\end{corollary}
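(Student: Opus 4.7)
The plan is to deduce this directly from Theorem \ref{Thm1} applied with $\gamma$ just below the upper endpoint $1/(d+1)$, combined with Bhargava's asymptotic counts \cite{Bhargava05, Bhargava10} for quartic (non-$D_4$) and quintic fields, which give that the number of such $K$ with $\D \leq X$ is $\sim c_d X$ for some $c_d > 0$. Concretely, I would set $\gamma = 1/(d+1) - \varepsilon_1$ for a small auxiliary parameter $\varepsilon_1 > 0$, so that the term $X^{\gamma(d+1)} = X^{1 - (d+1)\varepsilon_1}$ appearing in the exceptional-set bound of Theorem \ref{Thm1} is of the form $X^{1-\eta}$ for some $\eta > 0$.

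The case split on $\ell$ then comes from comparing the two candidates $\gamma/\ell \approx 1/(\ell(d+1))$ and $\delta_0(d)$ for the minimum in Theorem \ref{Thm1}. These two quantities coincide when $\ell = 1/((d+1)\delta_0(d))$, which evaluates to $48/5 = 9.6$ for $d = 4$ and $200/6 \approx 33.3$ for $d = 5$; thus the thresholds $\ell_4 = 10$ and $\ell_5 = 34$ have been calibrated precisely so that $1/(\ell(d+1)) \leq \delta_0(d)$ exactly when $\ell \geq \ell_d$. After shrinking $\varepsilon_1$, the minimum therefore equals $\gamma/\ell = 1/(\ell(d+1)) - \varepsilon_1/\ell$ in the regime $\ell \geq \ell_d$, and equals $\delta_0(d)$ in the regime $1 \leq \ell < \ell_d$.

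Given $\varepsilon > 0$ from the statement, I would then choose $\varepsilon_1$ small (say $\varepsilon_1 = \ell\varepsilon/2$) and invoke Theorem \ref{Thm1} with its $\varepsilon$-parameter set to $\varepsilon/2$; this absorbs the $\varepsilon_1/\ell$ slack into the stated $\varepsilon$, yielding both claimed pointwise bounds on $\#Cl_K[\ell]$ while ensuring that the exceptional set has size $O_{\ell,\varepsilon}(X^{1-\eta'})$ for some $\eta' > 0$. Dividing by the Bhargava count $\sim c_d X$ shows that the fraction of exceptional fields is $O(X^{-\eta'}) \to 0$, and hence the bound holds for $100\%$ of non-$D_4$ degree $d$ fields ordered by $\D$. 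There is no real obstacle: the whole substance sits in Theorem \ref{Thm1} and in Bhargava's counts, and the only careful point is the numerical verification that $\ell_d$ has been chosen so that $\gamma/\ell \leq \delta_0(d)$ precisely in the high-$\ell$ regime, which the computations above confirm.
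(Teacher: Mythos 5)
Your proposal is correct and is essentially the paper's own argument: the paper likewise deduces the corollary by applying Theorem \ref{Thm1} with $\gamma=1/(d+1)-\varepsilon$ and dividing the resulting $o(X)$ exceptional set by Bhargava's asymptotic count $\sim c_dX$ for non-$D_4$ quartic and quintic fields. Your calibration check that $\ell_4=10$ and $\ell_5=34$ are exactly the thresholds where $1/(\ell(d+1))\leq\delta_0(d)$ matches the paper's choice of $\ell_d$.
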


We now turn to more general but conditional results.
\begin{theorem}\label{Thm2}
Suppose $m\mid d$, $F$ is a number field of degree $m$, and $\varepsilon>0$. 
Assume GRH, and additionally that there are $\gg_{F,d} X$ number fields $K$ of degree $d$ with $\D\leq X$, and containing $F$.
Then for $100\%$ of  degree $d$ fields $K$ containing $F$ we have 
$$\#Cl_K[\ell] \ll_{d,\ell,\varepsilon} \D^{1/2-m/(\ell(m+d))+\varepsilon}.$$
\end{theorem}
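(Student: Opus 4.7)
My plan is to adapt the two-ingredient strategy underlying Theorem \ref{Thm1} to the relative setting $K/F$: use GRH to produce many small-norm prime ideals (in place of the unconditional input used for $d\leq 5$), and separately bound the proportion of $K$'s admitting a primitive element over $F$ of small height. Throughout, set $n=d/m=[K:F]$. The first ingredient, under GRH, is the effective Chebotarev density theorem of Lagarias--Odlyzko applied to the Galois closure of $K/F$: it yields $\gg_d Q/\log Q$ prime ideals of $K$ of degree one over $\IQ$ and norm $\leq Q$, for $Q\gg(\log\D)^{2}$. This replaces the unconditional small-$d$ splitting-prime input used in Theorem \ref{Thm1}.

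The second ingredient is a count of fields by primitive-element height. Every degree-$d$ field $K\supseteq F$ is $F(\alpha)$ for some $\alpha$ whose minimal polynomial over $F$ is monic of degree $n$, with coefficient vector a point in $\IP^{n}(F)$. By Schanuel's theorem, there are $\ll_{F,d}T^{(n+1)m}=T^{d+m}$ such coefficient vectors of $F$-projective height $\leq T$, and each determines at most $n$ fields $K$. Hence the number of fields $K\supseteq F$ of degree $d$ with $\D\leq X$ admitting a primitive element over $F$ of height $\leq T$ is $\ll_{F,d}T^{d+m}$. Choosing $T=X^{1/(d+m)-\varepsilon}$ makes this count $\ll X^{1-\varepsilon(d+m)}=o(X)$, so by the hypothesis $\gg_{F,d}X$ on the total number of such fields, the exceptional set has density zero.

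Combining the two ingredients via Proposition \ref{keylemma}: for the remaining $100\%$ of fields $K$, every primitive element of $K$ over $F$ has height $\geq T=X^{1/(d+m)-\varepsilon}$, and GRH supplies the small primes of the first step. Plugging both into the key lemma with $Q$ chosen optimally gives
$$\#Cl_K[\ell]\ll_{d,\ell,\varepsilon}\D^{1/2-m/(\ell(m+d))+\varepsilon}.$$
The factor $m$ in the numerator of the savings comes from using Schanuel for $F$ rather than Northcott for $\IQ$: it multiplies the $\gamma/\ell$ appearing in Theorem \ref{Thm1} by $m$, since the $\Oseen_F$-integer counts in play grow like the $m$-th power of the $\IZ$-integer counts; specializing $F=\IQ$ ($m=1$) recovers the $1/(\ell(d+1))$ exponent of Theorem \ref{Thm1}.

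The main obstacle I anticipate is that Proposition \ref{keylemma} is presumably formulated for primitive elements of $K$ over $\IQ$, so one must either establish a relative version keyed to $F$ or reinterpret it to deliver the correct $m$-dependence. Concretely, one must partition the relevant $\alpha\in\Oseen_K$ with $(\alpha)=\mathfrak{a}^\ell$ into the case where $\alpha$ is primitive over $F$ (where the height lower bound $T$ together with the Schanuel count supplies the $m$-factor) and the case where $\alpha$ lies in a proper intermediate subfield $F\subseteq F'\subsetneq K$ (where the contribution to $Cl_K[\ell]$ descends to $Cl_{F'}[\ell]$ and is handled separately, e.g.\ by induction on $[K:F]$). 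Tracking the $m$ correctly through this case analysis is the delicate point; without it one would obtain only the weaker exponent $1/(\ell(d+m))$.
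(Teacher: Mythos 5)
Your outline coincides with the paper's actual proof in its skeleton (GRH plus effective Chebotarev to get small split primes for \emph{every} field in the family, a count of fields admitting a small generator over $F$, and then Proposition \ref{keylemma}), but the step you yourself flag as the ``delicate point'' --- where the factor $m$ comes from --- is left genuinely unresolved, and the fix you propose (a relative version of Proposition \ref{keylemma}, partitioning the $\alpha\in\Oseen_K$ with $(\alpha)=\mathfrak{a}^\ell$ according to whether they are primitive over $F$ or lie in an intermediate field, plus induction on $[K:F]$) is neither needed nor carried out. The resolution is much simpler: Proposition \ref{keylemma} is applied with $K_0=\IQ$ exactly as stated, and its only height input is a lower bound on $\del=\inf\{H_K(\alpha):\IQ(\alpha)=K\}$. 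Since $F\subseteq K$, any $\alpha$ with $\IQ(\alpha)=K$ automatically satisfies $F(\alpha)=K$ and $[F(\alpha):F]=d/m$; hence every field of the family with $\del\leq\D^{\gamma}$, $\D\leq X$, is generated by an element of degree $d/m$ over $F$ of relative height at most $X^{\gamma}$, and Schmidt's counting theorem over $F$ (the rigorous form of your Schanuel/coefficient-vector count) shows there are $\ll X^{\gamma(d/m+1)}$ such fields, which is $o(X)$ for every $\gamma<m/(d+m)$. That single observation, combined with the hypothesis that there are $\gg_{F,d}X$ fields in the family, yields the $100\%$ statement with saving $\gamma/\ell\to m/(\ell(d+m))$; no intermediate-field case analysis and no modification of the key lemma enter at all.

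The reason you feared losing the factor $m$ is a height-normalization slip in your second ingredient. Your bound $\ll T^{d+m}$ counts coefficient vectors in $\IP^{n}(F)$, $n=d/m$, of absolute height at most $T$; for a root $\alpha$ of such a polynomial one has $H(f)\asymp_n H(\alpha)^n$ and $H_K(\alpha)=H(\alpha)^d$, so ``coefficient height $>T$'' translates into $H_K(\alpha)\gg T^{m}$, not $H_K(\alpha)\gg T$. With $T=X^{1/(d+m)-\varepsilon}$ this is precisely the hypothesis $\del>\D^{\gamma}$ with $\gamma\approx m/(d+m)$ that Proposition \ref{keylemma} requires; if instead one identifies $T$ with the relative-height threshold one indeed obtains only $1/(\ell(d+m))$, which is exactly the loss you anticipated. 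So the missing $m$ is supplied by this height conversion (equivalently, by counting directly in the relative height, where the exponent is $d/m+1$ rather than $d+1$), and your heuristic that the ``$\Oseen_F$-integer counts grow like the $m$-th power of the $\IZ$-integer counts'' points the wrong way: the gain comes from degree-$d/m$ points over $F$ being \emph{rarer}, in the relevant normalization, than degree-$d$ points over $\IQ$. Two smaller remarks: the small primes should come from Chebotarev (under GRH, Lagarias--Odlyzko) applied to the normal closure of $K/\IQ$ --- degree-one primes of $K$ do qualify for Proposition \ref{keylemma}, as they cannot be extensions of primes from proper subfields --- and one still needs the bookkeeping between $\D$ and $X$ (dyadic decomposition, as packaged in Proposition \ref{mainprop}), since the key lemma's hypotheses are stated in terms of $\D$.
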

So here we get an exponent which depends only on $d/m$ (the degree of $K/F$) but is independent of the degree of $K/\IQ$.
The proof of Theorem \ref{Thm2} actually provides a more precise quantitative result analogous to Theorem \ref{Thm1}.

Datskovsky and Wright
\cite[Theorem 4.2 and Theorem 1.1]{82} have shown that there are
$\gg_{F,d} X$ number fields $K$ of degree $d$ with $\D\leq X$, and containing $F$,
provided $2\leq d/m\leq 3$, and recent work of Bhargava, Shankar, and Wang \cite[Theorem 1.1]{BhargavaShankarWang} implies that
this holds even for $2\leq d/m\leq 5$. Hence, we get the following corollary.
\begin{corollary}\label{cor2}
Suppose $m\mid d$, $2\leq d/m\leq 5$, $F$ is a number field of degree $m$, and $\varepsilon>0$. 
Assume GRH. Then for $100\%$ of  degree $d$ fields $K$ containing $F$ we have 
$$\#Cl_K[\ell] \ll_{d,\ell,\varepsilon} \D^{1/2-m/(\ell(m+d))+\varepsilon}.$$
\end{corollary}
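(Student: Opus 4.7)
The plan is to derive this directly from Theorem~\ref{Thm2}; all that remains is to verify the density hypothesis of that theorem, namely that there are $\gg_{F,d} X$ number fields $K$ of degree $d$ with $\D\leq X$ containing $F$.

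First I would invoke the recent result of Bhargava, Shankar, and Wang \cite[Theorem 1.1]{BhargavaShankarWang}, which supplies exactly such a lower bound over the full relative range $2\leq d/m\leq 5$; the older Datskovsky--Wright bounds \cite[Theorems~1.1 and 4.2]{82} already cover the subrange $2\leq d/m\leq 3$. A small bookkeeping check is needed because these papers count degree $d/m$ extensions of $F$ ordered by the relative discriminant $N_{F/\IQ}(\mathfrak{d}_{K/F})$, whereas Theorem~\ref{Thm2} is phrased in terms of the absolute discriminant $\D$. This translation is routine: the tower formula for discriminants shows that the absolute discriminant of $K$ and the relative discriminant of $K/F$ differ only by a multiplicative factor depending solely on $F$ and $d/m$. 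Hence a lower bound $\gg_{F,d} Y$ for the count of extensions of $F$ of relative discriminant at most $Y$ passes to a lower bound $\gg_{F,d} X$ for the count of degree $d$ number fields containing $F$ of absolute discriminant at most $X$.

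With the density hypothesis in hand, Theorem~\ref{Thm2} applied with the same $m$, $d$, $F$, $\ell$, and $\varepsilon$ immediately gives $\#Cl_K[\ell]\ll_{d,\ell,\varepsilon}\D^{1/2-m/(\ell(m+d))+\varepsilon}$ for $100\%$ of degree $d$ fields containing $F$ (ordered by $\D$), which is precisely the claim.

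The corollary is by design a clean application of Theorem~\ref{Thm2} with a now-established density input, so there is essentially no substantive obstacle. If anything, the only point requiring a moment's care is the convention mismatch between \cite{BhargavaShankarWang} and Theorem~\ref{Thm2}, and this is resolved by the formal tower-formula computation outlined above.
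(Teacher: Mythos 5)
Your proposal is correct and follows exactly the paper's route: the corollary is deduced from Theorem~\ref{Thm2} once the density hypothesis is supplied by Datskovsky--Wright for $2\leq d/m\leq 3$ and Bhargava--Shankar--Wang for the full range $2\leq d/m\leq 5$, which is precisely what the paper does in the sentence preceding the corollary. Your extra remark on converting relative-discriminant counts to absolute-discriminant counts via the tower formula (a factor depending only on $F$ and $d/m$) is a correct and harmless bit of bookkeeping that the paper leaves implicit.
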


\section{Ellenberg and Venkatesh's Key Lemma}\label{sectionkeylemma}
Let 
\begin{alignat*}1
H_K(\alpha)=\prod_{v\in M_K}\max\{1,|\alpha|_v\}^{d_v}
\end{alignat*}
be the relative multiplicative Weil height of $\alpha\in K$.
Here $M_K$ denotes the set of places of $K$, and for each place $v$ we choose the unique representative $| \cdot |_v$ 
that either extends the usual Archimedean absolute value on $\IQ$ or a usual $p$-adic absolute value on $\IQ$, and $d_v = [K_v : \IQ_v]$ denotes the local degree at $v$.
Note that this is exactly the height  in \cite[(2.2)]{EllVentorclass} for the principal divisor $(\alpha, (\alpha))$ associated to $\alpha\in K^\times$.
We also use the following invariant
\begin{alignat*}1
\del=\inf\{H_K(\alpha);K=\IQ(\alpha)\},
\end{alignat*}
introduced by Roy and Thunder \cite{8}, and also studied\footnote{In the cited works the author used the absolute instead of the relative height, and denoted the invariant by $\delta(K)$.} in \cite{VaalerWidmer2013,VaalerWidmer}.
First we use the fact that the proof of the key lemma \cite[Lemma 2.3]{EllVentorclass} of Ellenberg and Venkatesh proves actually the following stronger
statement. Recall from \cite{EllVentorclass} that a prime ideal $\mathfrak{p}$ of $\Oseen_K$ is said to be an extension of a prime ideal 
from a subfield $K_0\subsetneq K$ if there exists a prime ideal $\mathfrak{p}_0$ of $\Oseen_{K_0}$ such that $\mathfrak{p}=\mathfrak{p_0}\Oseen_K$.
If $\mathfrak{p}$ and $\mathfrak{p}_0$ are non-zero prime ideals in $\Oseen_K$ and $\Oseen_{K_0}$ respectively and $\mathfrak{p}\mid \mathfrak{p}_0\Oseen_K$ 
then we say $\mathfrak{p}$ is unramified in $K/K_0$
if $\mathfrak{p}^2\nmid \mathfrak{p}_0\Oseen_K$. 

\begin{proposition}[Ellenberg and Venkatesh]\label{keylemma}
Suppose $K$ is a number field of degree $d$, $\del>\D^\gamma$, $\delta<\gamma/\ell$, and $\varepsilon>0$.
Moreover, suppose $\mathfrak{p}_1,\ldots,\mathfrak{p}_M$ are $M$ prime ideals in $\Oseen_K$ of norm
$N(\mathfrak{p}_i)\leq \D^\delta$ that are unramified in $K/\IQ$ and are not extensions of prime ideals from any proper subfield of $K$. 
Then we have 
$$\#Cl_K[\ell] \ll_{d,\ell,\gamma,\varepsilon} \D^{1/2+\varepsilon}M^{-1}.$$
\end{proposition}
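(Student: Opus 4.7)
The plan is to adapt Ellenberg and Venkatesh's proof of \cite[Lemma 2.3]{EllVentorclass} essentially verbatim, paying careful attention to where the hypothesis $\del>\D^\gamma$ enters the numerical bookkeeping. The content of the proposition is that the constraint on $\delta$ needed to make their argument work can be relaxed from a field-independent one to the effective bound $\delta<\gamma/\ell$; no genuinely new technique is needed.

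Concretely, for each pair $(c,i)\in Cl_K[\ell]\times\{1,\dots,M\}$, I would first apply Minkowski's convex-body theorem to the class $c\cdot[\mathfrak{p}_i]^{-1}$ to produce an integral representative of norm $\ll_d\sqrt{\D}$; multiplying by $\mathfrak{p}_i$ gives an integral ideal $\mathfrak{b}_{c,i}\in c$ with $v_{\mathfrak{p}_i}(\mathfrak{b}_{c,i})=1$ and $N(\mathfrak{b}_{c,i})\ll_d N(\mathfrak{p}_i)\sqrt{\D}\leq C_d\D^{1/2+\delta}$. Since $c^\ell=1$, the ideal $\mathfrak{b}_{c,i}^\ell$ is principal, and by Dirichlet's unit theorem one may choose a generator $\alpha_{c,i}\in\Oseen_K$ with $H_K(\alpha_{c,i})\ll_d N(\mathfrak{b}_{c,i})^\ell$. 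The key step is then to verify that $\alpha_{c,i}$ generates $K$ over $\IQ$: if instead $\alpha_{c,i}\in F$ for a proper subfield $F\subsetneq K$, then $(\alpha_{c,i})\Oseen_K$ is extended from $\Oseen_F$, and unramifiedness of $\mathfrak{p}_i$ in $K/\IQ$ combined with the hypothesis that $\mathfrak{p}_i$ is not an extension from $F$ forces at least one further prime $\mathfrak{p}'\neq\mathfrak{p}_i$ of $\Oseen_K$ above $\mathfrak{q}=\mathfrak{p}_i\cap\Oseen_F$; the equality $v_{\mathfrak{p}'}(\alpha_{c,i})=v_\mathfrak{q}(\alpha_{c,i})=v_{\mathfrak{p}_i}(\alpha_{c,i})=\ell$ that follows is then incompatible with the size-minimal choice of $\mathfrak{b}_{c,i}$.

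Once primitivity is established, the hypothesis $\del>\D^\gamma$ yields $H_K(\alpha_{c,i})\geq\del>\D^\gamma$, which together with the upper bound on the height forces $N(\mathfrak{b}_{c,i})\gg_d \D^{\gamma/\ell}$; the condition $\delta<\gamma/\ell$ is precisely what makes the $\D^\delta$ inflation in the Minkowski bound controllable. The map $(c,i)\mapsto\alpha_{c,i}$ modulo units is injective up to a $O(\log\D)$ ambiguity: $\mathfrak{b}_{c,i}$ is recovered as the unique integral $\ell$-th root of $(\alpha_{c,i})$, the class $c=[\mathfrak{b}_{c,i}]$ from $\mathfrak{b}_{c,i}$, and the index $i$ is one of the $\ll\log\D$ small primes dividing $\mathfrak{b}_{c,i}$. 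A uniform bound for the number of integral ideals of $\Oseen_K$ of bounded norm (for instance \cite[Lemma 2.4]{EllenbergPierceWood}) then completes the count and yields $\#Cl_K[\ell]\cdot M\ll_{d,\ell,\gamma,\varepsilon}\D^{1/2+\varepsilon}$.

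The principal obstacle is the primitivity step: translating the abstract hypothesis that $\mathfrak{p}_i$ is unramified and not an extension from any subfield into a genuine quantitative bound forcing $\IQ(\alpha_{c,i})=K$, and in parallel explaining cleanly how the lower bound $H_K(\alpha_{c,i})>\D^\gamma$ coming from $\del$ absorbs the $\D^\delta$ loss from the Minkowski bound under the assumption $\delta<\gamma/\ell$. This quantitative tracking is the refinement highlighted by the author in the paragraph preceding the proposition.
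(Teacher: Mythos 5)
The paper's own proof is nothing more than the observation that Ellenberg and Venkatesh's proof of \cite[Lemma 2.3]{EllVentorclass} invokes their Lemma 2.2 only as a lower bound for the height of elements generating $K$, so that hypothesis can be swapped for $\del>\D^\gamma$; the task is therefore to reproduce their argument faithfully, and your reconstruction deviates from it precisely at the points where the hypotheses do their work. Two steps are not justified. First, ``by Dirichlet's unit theorem one may choose a generator with $H_K(\alpha_{c,i})\ll_d N(\mathfrak{b}_{c,i})^\ell$'' is false in general: making the height of a generator comparable to the norm of the ideal requires controlling the covering radius of the unit lattice, i.e.\ a factor $e^{O_d(R_K)}$, and the regulator can be as large as $\D^{1/2+o(1)}$. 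Ellenberg and Venkatesh avoid choosing generators altogether by working with their height on pairs $(\alpha,D)$ of elements and Arakelov divisors (the height recalled in Section 2 as that of the principal divisor $(\alpha,(\alpha))$). Second, the primitivity of $\alpha_{c,i}$ is not proved: ``incompatible with the size-minimal choice of $\mathfrak{b}_{c,i}$'' is not an argument, since Minkowski's theorem only gives existence of a small representative, and nothing prevents the second prime $\mathfrak{p}'$ above $\mathfrak{q}$ from dividing that representative; indeed there is no reason $\alpha_{c,i}$ should generate $K$ at all (also $v_{\mathfrak{p}_i}(\mathfrak{b}_{c,i})=1$ already requires a representative coprime to $\mathfrak{p}_i$).

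More importantly, even granting both steps, the bookkeeping does not yield the stated bound and the hypotheses are not used where they matter. Your final count is over integral ideals of norm at most $C_d\D^{1/2+\delta}$, a set of size $O_{d,\varepsilon}(\D^{1/2+\delta+\varepsilon})$, so the argument gives at best $\#Cl_K[\ell]\ll \D^{1/2+\delta+\varepsilon}M^{-1}$; the lost factor $\D^{\delta}$ is exactly the entire saving in the application of the proposition (there $M\asymp\D^{\delta_0-\varepsilon}$ with $\delta=\delta_0+\varepsilon$), so the conclusion becomes trivial. Moreover, the hypothesis $\del>\D^\gamma$ enters your sketch only through the inequality $N(\mathfrak{b}_{c,i})\gg\D^{\gamma/\ell}$, which plays no role in the count; note that $H_K(\alpha_{c,i})\geq N(\mathfrak{b}_{c,i})^\ell$ is of size roughly $\D^{\ell/2}$, so comparing it with $\D^{\gamma}$, $\gamma\leq 1/(d+1)$, is vacuous. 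In the Ellenberg--Venkatesh argument the condition $\delta<\gamma/\ell$ (their $\delta<1/(2\ell(d-1))$, coming from Lemma 2.2) is applied to auxiliary elements of height about $\D^{\ell\delta}<\D^\gamma\leq\del$, produced when the objects attached to two distinct primes $\mathfrak{p}_i\neq\mathfrak{p}_j$ (and the same class) would collide: such an element cannot be primitive, hence lies in a proper subfield $F$, its ideal is extended from $F$ and is supported essentially on $\mathfrak{p}_i^{\ell}\mathfrak{p}_j^{-\ell}$, and unramifiedness then forces $\mathfrak{p}_i$ to be the extension of a prime of $F$, contradicting the hypothesis. It is this mechanism that gives near-injectivity into a set of size $O_{d,\varepsilon}(\D^{1/2+\varepsilon})$ rather than $O_{d,\varepsilon}(\D^{1/2+\delta+\varepsilon})$, i.e.\ the full factor $M^{-1}$; your dichotomy points the height lower bound at the wrong element, so the key saving is not recovered.
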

\begin{proof}
Exactly as in \cite[Lemma 2.3]{EllVentorclass} with $K_0=\IQ$ except that we replace their Lemma 2.2 by the hypothesis $\del>\D^\gamma$.
\end{proof}
Ellenberg \cite[Proposition 1]{Elllowheight} pointed out that the proof of \cite[Lemma 2.3]{EllVentorclass} even 
provides the stronger conclusion $\#Cl_K[\ell] \ll_{d,\ell,\varepsilon} \D^{1/2+\varepsilon}M_K$, where
$M_K:=\inf_T(T^{-1/\ell}(1+N'_K(T)))$, and $N'_K(T)$ denotes the number of primitive elements in $K$ of 
(relative) height at most $T$.

\section{The main proposition}

Let $d>1$ be an integer. We set
\begin{alignat}1\label{coll}
S_{\IQ,d}=\{K\subset \Qbar; [K:\IQ]=d\}
\end{alignat}
for the collection of all number fields of degree $d$. For a subset $S\subset S_{\IQ,d}$
we set 
\begin{alignat*}1
\B_S(X;Y,M)&:=\{K\in S;\D\leq X, \text{ at most $M$ primes $p\leq Y$ split completely in $K$}\},\\
P_S&:=\{\alpha\in \Qbar;\IQ(\alpha)\in S\},\\
N_H(P_S,X)&:=\#\{\alpha\in P_S;H_{\IQ(\alpha)}(\alpha)\leq X\}.
\end{alignat*}
We can now formulate our main proposition.
%The setup is streamlined for our application, and
%the role of $\tilde{\delta}_0$ will become clear in Section \ref{proofs}. 

\begin{proposition}\label{mainprop}
Suppose $S\subset S_{\IQ,d}$ and $\theta>0$ are such that
\begin{alignat*}1
N_H(P_S,X)\ll_{S,\theta} X^\theta.
\end{alignat*}
Let $\gamma>0$, $\varepsilon>0$, $\tilde{\delta}_0>0$, $\delta_0:=\min\{\gamma/\ell-2\varepsilon,\tilde{\delta}_0\}$, and $E_{\delta_0,\varepsilon}(\cdot)$ be an increasing function such that 
\begin{alignat*}1
\B_S(X;X^{\delta_0},X^{\delta_0-\varepsilon})\leq E_{\delta_0,\varepsilon}(X).
\end{alignat*}
Then we have 
$$\#Cl_K[\ell] \ll_{d,\ell,\gamma,\varepsilon} \D^{1/2-\delta_0+2\varepsilon}$$
for all but $O_{S,\theta}((\log X)E_{\delta_0,\varepsilon}(X)+X^{\gamma\theta})$ fields $K$ in $S$ with $\D\leq X$.
\end{proposition}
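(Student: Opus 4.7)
The plan is to split the fields $K \in S$ with $\D \leq X$ for which the bound $\#Cl_K[\ell] \ll \D^{1/2 - \delta_0 + 2\varepsilon}$ fails into two ``exceptional'' classes, matching the two hypotheses required to apply Proposition \ref{keylemma}: a primitive element of small height, and a supply of small completely split rational primes.

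\textbf{Small-height exceptions.} Call $K$ an exception of the first kind if $\del \leq \D^\gamma$. Then $K$ admits a primitive element $\alpha$ with $H_K(\alpha) \leq X^\gamma$; since $\alpha$ determines $K = \IQ(\alpha)$, the number of such exceptions is at most $N_H(P_S, X^\gamma) \ll_{S,\theta} X^{\gamma \theta}$, accounting for the $X^{\gamma\theta}$ term in the claimed error.

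\textbf{Small-prime exceptions via the key lemma.} For the remaining fields (those with $\del > \D^\gamma$), I apply the contrapositive of Proposition \ref{keylemma} with $\delta := \delta_0 + \varepsilon$ (admissible since $\delta < \gamma/\ell$ by the definition of $\delta_0$) and $M := \D^{\delta_0 - \varepsilon}$. Each completely split rational prime $p$ contributes $d$ prime ideals of norm $p$ in $\Oseen_K$, all unramified and, since the equation $N(\mathfrak{p}_0)^{[K:K_0]} = p$ has no solution for $[K:K_0] \geq 2$, none arising as extensions from a proper subfield. Consequently, if such a $K$ violates the target bound then $K$ has fewer than $\D^{\delta_0 - \varepsilon}/d$ completely split rational primes $p \leq \D^{\delta_0 + \varepsilon}$.

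\textbf{Dyadic decomposition---the main obstacle.} The delicate point is that the hypothesis on $\B_S$ is phrased at the single scale $X$, whereas the previous step produces a $\D$-dependent condition; this is precisely why the slack $\delta = \delta_0 + \varepsilon$ (rather than $\delta_0$) was built in. I resolve it with a dyadic partition $\D \in (Y_j/2, Y_j]$ where $Y_j := X/2^j$, $j = 0, 1, \ldots, \lceil \log_2 X \rceil$. Once $Y_j \geq 2^{(\delta_0 + \varepsilon)/\varepsilon}$ one checks $(Y_j/2)^{\delta_0 + \varepsilon} \geq Y_j^{\delta_0}$, so the conclusion of the previous paragraph upgrades to ``$K$ has fewer than $Y_j^{\delta_0 - \varepsilon}$ completely split primes $p \leq Y_j^{\delta_0}$'', placing $K$ in $\B_S(Y_j; Y_j^{\delta_0}, Y_j^{\delta_0 - \varepsilon})$ and contributing at most $E_{\delta_0, \varepsilon}(Y_j)$ fields; the finitely many fields with $\D$ below the threshold are bounded by Hermite's theorem and absorbed in the implied constant. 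Summing $E_{\delta_0, \varepsilon}(Y_j)$ over the $O(\log X)$ scales and exploiting the monotonicity of $E_{\delta_0, \varepsilon}$ yields $O((\log X) E_{\delta_0, \varepsilon}(X))$ small-prime exceptions, which combined with the small-height exceptions gives the claimed total.
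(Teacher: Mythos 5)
Your proof is correct and follows essentially the same route as the paper: count the fields with $\del\leq \D^\gamma$ via $N_H(P_S,X^\gamma)\ll X^{\gamma\theta}$, apply Proposition \ref{keylemma} with $\delta=\delta_0+\varepsilon$ and $M\approx \D^{\delta_0-\varepsilon}$ using degree-one primes above completely split rational primes, and bridge the mismatch between the $\D$-scale and the fixed scale of the $\B_S$-hypothesis by a dyadic decomposition in the discriminant (losing the factor $\log X$), with Hermite's theorem disposing of the bounded-discriminant fields. The only differences are cosmetic (dyadic scales $X/2^j$ versus the paper's $2^{\kappa_i}$, and using all $d$ primes above each split prime rather than one).
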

\begin{proof}
We set $\ka_i=\log_2X-[\log_2X]+i$, and for $1\leq i\leq [\log_2X]$
$$D_{\delta_0,\gamma}(i)=\{K\in S;2^{\ka_{i-1}}<\D\leq 2^{\ka_i}, \del>\D^\gamma, K\notin \B_S(2^{\ka_i};2^{\ka_i\delta_0},2^{\ka_i(\delta_0-\varepsilon)})\}.$$
By Hermite's Theorem the inequality (\ref{mainpropbound}) below trivially holds true for all $K\in S$ with $\D\leq 2^{\delta_0/\varepsilon}$; so let's assume $\D>2^{\delta_0/\varepsilon}$.
Note that for $K\in D_{\delta_0,\gamma}(i)$  there exist $\geq \D^{\delta_0-\varepsilon}$ primes $p\leq (2\D)^{\delta_0}<\D^{\delta_0+\varepsilon}$
that split completely in $K$. Since $\delta_0+\varepsilon<\gamma/\ell$ we can apply Proposition \ref{keylemma} with $\delta=\delta_0+\varepsilon$ and $M=\lceil\D^{\delta_0-\varepsilon}\rceil$. 
Hence, we have shown that
\begin{alignat}1\label{mainpropbound}
\#Cl_K[\ell] \ll_{d,\ell,\gamma,\varepsilon} \D^{1/2-\delta_0+2\varepsilon}
\end{alignat}
for all $K\in \cup_i D_{\delta_0,\gamma}(i)$. 
Next, we note that
\begin{alignat*}1
&\#\cup_i D_{\delta_0,\gamma}(i)\\
&\geq \#\{K\in S; \D\leq X\}-\sum_i\#\B_S(2^{\ka_i};2^{\ka_i\delta_0},2^{\ka_i(\delta_0-\varepsilon)})-\#\{K\in S;\D\leq X, \del\leq \D^\gamma\}.
\end{alignat*}
By hypothesis $\#\B_S(2^{\ka_i};2^{\ka_i\delta_0},2^{\ka_i(\delta_0-\varepsilon)})\leq E_{\delta_0,\varepsilon}(2^{\ka_i})$, and since $2^{\ka_i}\leq X$ we conclude
\begin{alignat*}1
\sum_i\#\B_S(2^{\ka_i};2^{\ka_i\delta_0},2^{\ka_i(\delta_0-\varepsilon)})\leq (\log_2 X)E_{\delta_0,\varepsilon}(X).
\end{alignat*}
Finally, we observe that the image of the map $\alpha\rightarrow \IQ(\alpha)$ with domain 
$$\{\alpha\in P_S;H_{\IQ(\alpha)}(\alpha)\leq X^\gamma\}$$ 
covers the set
$$\{K\in S;\D\leq X, \del\leq \D^\gamma\},$$ 
and using the hypothesis we conclude that
\begin{alignat*}1
\#\{K\in S;\D\leq X, \del\leq \D^\gamma\}\leq N_H(P_S,X^\gamma)\ll_{S,\theta}X^{\gamma\theta}.
\end{alignat*}
Hence, we have shown that for all but $O_{S,\theta}((\log X)E_{\delta_0,\varepsilon}(X)+X^{\gamma\theta})$ fields $K$ in $S$ with $\D\leq X$ we have
$\#Cl_K[\ell] \ll_{d,\ell,\gamma,\varepsilon} \D^{1/2-\delta_0+2\varepsilon}$.
\end{proof}

\section{Proofs of the Theorems}\label{proofs}
\subsection{Upper bounds for $N_H(P_S,X)$}\label{upperbound}
Let $F$ be a number field of degree $m\mid d$, and define
$$S_{F,d}:=\{K\subseteq \Qbar; [K:\IQ]=d, F\subseteq K\}.$$ 
Applying Schmidt's \cite[Theorem]{22} with Schmidt's $(K,k,d,n)$ replaced by our $(F,m,d/m,1)$
shows that the number of $P=(1:\alpha)\in \IP^1(\Qbar)$ with $[F(\alpha):F]=d/m$ and $H_{F}(P)\leq X$ (for Schmidt's
projective field height \cite[(1.2)]{22}) is $\ll_{m,d}X^{d/m+1}$. Since $H_{F}(P)=H_{F(\alpha)}(\alpha)$
%A simple special case of a result of Schmidt \cite[Theorem]{22} shows that
we conclude that
\begin{alignat*}1
\#\{\alpha\in \Qbar; [F(\alpha):F]=d/m, H_{F(\alpha)}(\alpha)\leq X\}\ll_{m,d}X^{d/m+1}. 
\end{alignat*}
Note that if $\alpha \in P_{S_{F,d}}$ then $[F(\alpha):F]=d/m$ and $\IQ(\alpha)=F(\alpha)$ so that $H_{\IQ(\alpha)}(\alpha)=H_{F(\alpha)}(\alpha)$. Therefore,
\begin{alignat}1\label{ptsupperbound}
N_H(P_{S_{F,d}},X)\ll_{m,d}X^{d/m+1}.
\end{alignat}

\begin{comment}
\subsection{Lower bounds for $N_{D}(S,X)$}\label{lowerbound}
First we note that the conjectured asymptotic formula
\begin{alignat*}1
N_D(S_{F,d}, X)=cX+o(X)
\end{alignat*}
where $c=c_{F,d}>0$, has been proven for $d/[F:\IQ]=2$ and $d/[F:\IQ]=3$  by Datskovsky and Wright
 \cite[Theorem 4.2 and Theorem 1.1]{82}, 
%\cite[Theorem 4.2]{82} and \cite[Theorem 1.1]{82} respectively, 
 and for $F=\IQ$ and $d\in \{4,5\}$ by Bhargava \cite{Bhargava05, Bhargava10}.
Moreover, recent work of Bhargava, Shankar and Wong \cite[Theorem 1.1]{BhargavaShankarWang} implies that the asymptotic formula also holds
for $d\in\{4,5\}$ and any ground field $F$.
Bhargava \cite{Bhargava05} also showed that with $S^*_{\IQ,4}$, the set of all non-$D_4$ number fields of degree $4$, we have 
\begin{alignat*}1
N_D(S^*_{\IQ,4}, X)=c^*X+o(X)
\end{alignat*}
with a positive constant $c^*$.
\end{comment}

\subsection{Proof of Theorem \ref{Thm1}}
Let $S^*_{\IQ,4}$, the set of all non-$D_4$ number fields of degree $4$.
We apply Proposition \ref{mainprop} with $S=S^*_{\IQ,4}$ if $d=4$, and with $S=S_{\IQ,5}$ if $d=5$.
As explained in Section \ref{upperbound} we can take 
$\theta=d+1$. Let $\varepsilon>0$, and set $\tilde{\delta}_0:=\delta_0(d)$ so that $\delta_0=\min\{\gamma/\ell-2\varepsilon,\delta_0(d)\}$.
By \cite[Proposition 6.1]{EllenbergPierceWood} and \cite[Theorems 2.2 and Theorem 2.3]{EllenbergPierceWood} we have\footnote{Here , as in \cite[Proposition 7.1]{EllenbergPierceWood},  $c_0(d)$ denotes a positive constant depending only on $d$.} (cf. \cite[Proposition 7.1]{EllenbergPierceWood})
$$\#\B_S(X;X^{\delta_0},\frac{c_0(d)}{2}X^{\delta_0}(\log X^{\delta_0})^{-1})\ll_{\varepsilon} X^{1-\delta_0+\varepsilon},$$
provided $X$ is sufficiently large in terms of $\delta_0$.
Hence,
$$\#\B_S(X;X^{\delta_0},X^{\delta_0-\varepsilon})\ll_{\ell,\gamma,\varepsilon} X^{1-\delta_0+\varepsilon}.$$
Thus we can take $E_{\delta_0,\varepsilon}(X)=C_{\ell,\gamma,\varepsilon}X^{1-\delta_0+\varepsilon}$ for a sufficiently large constant $C_{\ell,\gamma,\varepsilon}$.
We conclude from Proposition \ref{mainprop} that for all but $O_{\ell,\gamma,\varepsilon}(X^{1-\delta_0+2\varepsilon}+X^{\gamma(d+1)})$ fields $K\in S$
with $\D\leq X$ we have 
$\#Cl_K[\ell] \ll_{\ell,\gamma,\varepsilon} \D^{1/2-\delta_0+2\varepsilon}$.
Since $\delta_0\geq\min\{\gamma/\ell,\delta_0(d)\}-2\varepsilon$ and $\varepsilon>0$ was arbitrary the statement of Theorem \ref{Thm1} follows.

\subsection{Proof of Theorem \ref{Thm2}}
We will apply Proposition \ref{mainprop} with $S=S_{F,d}$.
From Section \ref{upperbound} we known that we can take  $\theta=d/m+1$. 
Let $0<\varepsilon<m/(3\ell(d+m))$, choose $\tilde{\delta}_0=1$ and $\gamma=m/(d+m)-\ell\varepsilon$,  
so that $\delta_0=\min\{\gamma/\ell-2\varepsilon,\tilde{\delta}_0\}=m/(\ell(d+m))-3\varepsilon>0$. Since we assume GRH we can apply Lagarias and Odlyzko's effective Chebotarev density Theorem \cite{lo1977} to the normal closure of $K\in S$ to deduce\footnote{We use that the degree of the normal closure $L$ of $K$
is at most $d!$ and that $\log D_L\leq 2(d!)^2\log \D$ since each prime that ramifies in $L$ must ramify in $K$, and the order to which a rational prime divides $D_L$ is bounded from above by $2[L : Q]^2$ (cf. \cite[Theorem B.2.12.]{BG}).} that 
for every $K\in S$  the number of primes $p\leq Y$ that split completely in $K$ is $>Y^{1-\varepsilon'}$, provided $Y\geq (\log\D)^2$ and $Y\geq Y_0(F,d,\varepsilon')$.
Setting  $Y=X^{\delta_0-\varepsilon}$ and $\varepsilon'=\varepsilon/\delta_0$
we conclude that for all $X$ large enough the set $\B_S(X;Y,Y^{1-\varepsilon'})$ is empty, and hence we have for all $X\geq 2$
$$\#\B_S(X;X^{\delta_0},X^{\delta_0-\varepsilon})\ll_{F,d,\ell,\varepsilon}1.$$
Now we can  apply Proposition \ref{mainprop} to conclude that for all but $O_{F,d,\ell,\varepsilon}(X^{1-\ell\varepsilon(d+m)/m})$
fields $K$ of degree $d$ with $\D\leq X$ and containing $F$ we have 
$\#Cl_K[\ell] \ll_{d,\ell,\varepsilon} \D^{1/2-m/(\ell(d+m))+5\varepsilon}.$
By hypothesis there are $\gg_{F,d} X$ number fields $K$ of degree $d$ with $\D\leq X$, and containing $F$. 
Since $\varepsilon>0$ was arbitrarily small the statement of Theorem \ref{Thm2} follows.

%\section*{Acknowledgments}
%I would like to thank the referees for carefully reading the manuscript and for useful remarks.

\bibliographystyle{amsplain}
\bibliography{literature}

\end{document}